  \newtheorem{theorem}{Theorem}[section]
   \newtheorem{lemma}[theorem]{Lemma}
   \newtheorem{proposition}[theorem]{Proposition}
   \newtheorem{corollary}[theorem]{Corollary}
\newcommand{\ben}{\begin{enumerate}}
\newcommand{\een}{\end{enumerate}}
\newcommand{\bt}{\begin{theorem}}
\newcommand{\et}{\end{theorem}}
\newcommand{\bl}{\begin{lemma}}
\newcommand{\el}{\end{lemma}}
\newcommand{\bc}{\begin{corollary}}
\newcommand{\ec}{\end{corollary}}
\newcommand{\bp}{\begin{proposition}}
\newcommand{\ep}{\end{proposition}}
\newcommand{\br}{\begin{remark}}
\newcommand{\er}{\end{remark}}
\newcommand{\bpf}{\begin{proof}}
\newcommand{\epf}{\end{proof}}
\begin{document}

\title{The asymptotic distribution of the remainder in a certain base-$\beta$ expansion}

\author{I.~W.~Herbst, J. M\o ller, A. M. Svane}

\date{}

\maketitle

\begin{abstract} 
Let $X=\sum_{k=1}^\infty X_k \beta^{-k}$ be the base-$\beta$ expansion of a continuous random variable $X$ on the unit interval where $\beta$ is the golden ratio. We study the asymptotic distribution and convergence rate of the scaled remainder $\sum_{k=n+1}^\infty X_k \beta^{n-k}$ when $n$ tends to infinity. 
\end{abstract}

\section{Introduction}
\label{s:intro}

Let $X$ be a random variable so that $0\le X<1$, and for $\beta>1$ consider its 
base-$\beta$ expansion
\[X=\sum_{n=1}^\infty X_n\beta^{-n}\]
where its $n$-th digit $X_n$ is defined as follows. For $x\in[0,1)$, let $T_\beta(x)=x\beta-\lfloor x\beta\rfloor$ be the base-$\beta$ transformation where $\lfloor\cdot\rfloor$ is the floor function, and let $T_\beta^n=T_\beta\circ\cdots\circ T_\beta$ denote the composition of $T_\beta$ with itself $n$ times. Then
 $X_n=\lfloor \beta T_\beta^{n-1}(X)\rfloor$ where $T_\beta^{0}(X)=X$. Furthermore, let
  $P_n$ denote the law of $T_\beta^{n}(X)=\beta^{n}\sum_{k=n+1}^\infty X_k\beta^{-k}$, where we interpret $\sum_{k=n+1}^\infty X_k\beta^{-k}$ as `the $n$-th remainder if we keep the $n$ first digits'. In \cite{paper1} we studied the convergence properties of $P_n$ when $\beta$ is an integer $\beta =q\ge2$:
 as $n\rightarrow\infty$, we proved that under mild conditions $P_n$ converges in total variation distance  to the uniform measure on $[0,1]$ and under certain smoothness conditions it converges with rate $q^{-n}$. It is natural to ask if such results extend to the case where $\beta>1$ is not an integer.
 Base-$\beta$
expansions where $\beta>1$ is not an integer are of interest 
  in symbolic dynamics, see \cite{Blanchard1989,Parry1966}.

In the present paper we consider the special case where $\beta=(1+\sqrt 5)/2$ is the golden ratio.
We show that when the distribution of $X$ is absolutely continuous with respect to Lebesgue measure, $P_n$  converges in the total variation distance to the absolutely continuous probability measure $P_\beta$ on $[0,1]$ with probability density function (pdf) 
\begin{equation}\label{e:f}
	f_\beta(x)=\begin{cases}
		(1+\beta)/\sqrt{5} & \text{if }0\le x<\beta^{-1},\\ \beta/\sqrt{5} & \text{if }\beta^{-1}\le x<1.
	\end{cases}
\end{equation}
It is well-known that $P_\beta$ is the unique absolutely continuous distribution which is
invariant under $T_\beta$, since $T_\beta$ is ergodic with respect to Lebesgue measure, see e.g.\ \cite{Karma}, where an equivalent expression for $f_\beta$ is given. Hence this result is analogous to the ones in \cite{paper1}.
Furthermore, we verify that when the pdf of $X$ is differentiable with bounded derivative, the convergence rate is exponential, but with a less obvious exponent than in \cite{paper1}. Also the proofs become more complicated when $\beta$ is the golden ratio because the way that base-$\beta$ expansions correspond to nested partitions of $[0,1)$ is more complex, as detailed in Section \ref{s:lemmas}. The proof of the main result (Theorem~\ref{t:3} in Section~\ref{s:results-beta}) exploits the nice properties of the golden ratio including its relation with the Fibonacci numbers. 

We leave the following open
problems for future research. For cases where $\beta>1$ is neither an integer nor the golden ratio  it may be interesting to study the convergence properties of $P_n$, but partitions of $[0,1)$ corresponding to the base-$\beta$ expansion will be even more complicated than if $\beta$ is the golden ratio or an integer greater than or equal to 2, see \cite{Parry1960}. Another, possibly more important, case
of interest is  obtained by defining
 $T(x)=1/x-\lfloor 1/x\rfloor$ and considering  the $n$-th remainder $T^{n}(X)=1/(X_n+1/(X_{n+1}+....))$ 
 obtained from
the continued fraction expansion  
 $X=1/(X_1+1/(X_2+...))$ where
  $X_n=\lfloor T^{n-1}(X)\beta\rfloor$.

\section{Some background on $\beta$-expansions}\label{s:lemmas}
For the remainder of this paper, let $\beta=(1+\sqrt5)/2$. For any number $x\in [0,1)$, $T_\beta$ defines  a $\beta$-expansion $x=\sum_{n=1}^\infty j_n \beta^{-n}$ where the digits $j_n \in \{0,1\}$ satisfy $j_n = \lfloor \beta T_\beta^{n-1}(x)  \rfloor$, $n=1,2,\ldots$ In fact there is a one-to-one correspondence between $\beta$-expansions of of this form and the set of digit sequences 
$$\Omega=\{(j_1,j_2,...)\,|\,j_n\in\{0,1\}\mbox{ and }j_nj_{n+1}=0\mbox{ for }n=1,2,...\},$$ 
see \cite[Section 2.2]{Fritz}. In this section, we introduce some more notation and prove two lemmas that will be needed for the main result in Section \ref{s:results-beta}.  

For $n=1,2,...$, let 
$$\Omega_n=\{(j_1,...,j_n)\in\{0,1\}^n\,|\, j_k j_{k+1}=0\mbox{ for }1\le k<n\}$$ 
which corresponds to the set of base-$\beta$ fractions $\sum_{k=1}^n j_k\beta^{-k}$ of order at most $n$ and where $(j_1,...,j_n,0,0,...)\in\Omega$. 
For each $J=(j_1,...,j_n)\in\Omega_{n}$, define $L_{n,J}=\sum_{k=1}^n j_k\beta^{-k}$. Denote by $\prec$ the lexicographic order on $\Omega_n$ and let $\hat J=(\hat j_1,...,\hat j_n)$ be the maximal element of $\Omega_n$, that is, $\hat j_k=1$ if $k$ is odd and $\hat j_k=0$ if $k$ is even.
Finally, define the intervals 
$$I_{n,J}=I_{j_1,...,j_n}=[L_{n,J},L_{n,J}+ j_n\beta^{-n-1}+(1- j_n)\beta^{-n}),\quad J=(j_1,...,j_n)\in\Omega_n.$$ 

The following lemma shows that the usual ordering of the numbers $L_{n,J}$ with $J\in\Omega_n$ agrees with that induced by $\prec$ via the bijective mapping $J\mapsto L_{n,J}$. Moreover, the intervals $I_{n,J}$ form a disjoint partition of $[0,1)$.

\begin{lemma}\label{l:2a} We have
\begin{equation}\label{e:lnhatJ}
L_{n,\hat J}+\hat j_n\beta^{-n-1}+(1-\hat j_n)\beta^{-n}=1.
\end{equation}
Furthermore, for every $J=(j_1,...,j_n)\in\Omega_n$ with $J\prec\hat J$, if $J'\in\Omega_n$ is the smallest element with $J\prec J'$ then
\begin{equation}\label{e:J'}
L_{n,J'}=L_{n,J}+j_n\beta^{-n-1}+(1-j_n)\beta^{-n}<1.
\end{equation}
\end{lemma}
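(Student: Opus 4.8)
The plan is to reduce everything to one elementary identity special to the golden ratio. Since $\beta^2=\beta+1$ we have $\beta^{-1}=\beta-1$ and hence $\beta^{-2}=2-\beta$, so that $1-\beta^{-2}=\beta-1=\beta^{-1}$. Summing the geometric series of odd-indexed powers then yields the key formula
\[
\sum_{k=1}^m\beta^{-(2k-1)}=\beta^{-1}\,\frac{1-\beta^{-2m}}{1-\beta^{-2}}=1-\beta^{-2m},\qquad m\ge 0,
\]
which I will use repeatedly.

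For the first claim \eqref{e:lnhatJ}, I would simply evaluate $L_{n,\hat J}$, recalling that $\hat j_k=1$ exactly for odd $k\le n$. If $n=2m-1$ is odd there are $m$ such indices (the largest being $n$, so $\hat j_n=1$), and the identity gives $L_{n,\hat J}=1-\beta^{-2m}=1-\beta^{-(n+1)}$; adding $\hat j_n\beta^{-n-1}=\beta^{-(n+1)}$ yields $1$. If $n=2m$ is even there are again $m$ odd indices (the largest being $n-1$, so $\hat j_n=0$), giving $L_{n,\hat J}=1-\beta^{-2m}=1-\beta^{-n}$; adding $(1-\hat j_n)\beta^{-n}=\beta^{-n}$ again yields $1$. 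This settles \eqref{e:lnhatJ}.

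The substance of the lemma is \eqref{e:J'}, and the main obstacle is purely combinatorial: identifying the lexicographic successor $J'$ inside the constrained set $\Omega_n$. I claim that if $J\prec\hat J$ and $i^\ast$ denotes the largest index with $j_{i^\ast}=j_{i^\ast-1}=0$ (using the convention $j_0=0$), then $J'=(j_1,\dots,j_{i^\ast-1},1,0,\dots,0)$; such an $i^\ast$ exists precisely because $J\ne\hat J$, and one checks directly that this $J'$ lies in $\Omega_n$ and is the least element exceeding $J$. The crucial consequence of the maximality of $i^\ast$ is that the suffix $(j_{i^\ast+1},\dots,j_n)$ of $J$ is \emph{forced} to be the alternating block $1,0,1,0,\dots$: no two consecutive zeros occur among the positions $i^\ast,\dots,n$ (otherwise a larger increment index would exist), while no two consecutive ones occur by the defining constraint of $\Omega_n$; together with $j_{i^\ast}=0$ this pins the suffix down completely.

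With this structure in hand the verification of \eqref{e:J'} is a direct computation. Since $J$ and $J'$ agree before position $i^\ast$ and $J'$ carries a single $1$ there followed by zeros,
\[
L_{n,J'}-L_{n,J}=\beta^{-i^\ast}-\sum_{k=i^\ast+1}^{n}j_k\beta^{-k},
\]
and the forced alternating form of the suffix lets me evaluate the sum with the key identity, splitting according to the parity of the suffix length $p=n-i^\ast$ (equivalently, according to the value of $j_n$). In the even case $j_n=0$ and the difference collapses to $\beta^{-n}$, while in the odd case $j_n=1$ and it collapses to $\beta^{-(n+1)}$; in both cases this is exactly the length $j_n\beta^{-n-1}+(1-j_n)\beta^{-n}$ of $I_{n,J}$, giving \eqref{e:J'}. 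Finally, the strict inequality $L_{n,J'}<1$ follows by telescoping: summing the equalities \eqref{e:J'} along the chain of successors from $J'$ up to $\hat J$ adds only positive interval lengths, so $L_{n,J'}\le L_{n,\hat J}<1$ by \eqref{e:lnhatJ}.
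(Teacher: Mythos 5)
Your proposal is correct and follows essentially the same route as the paper: both parts rest on the golden-ratio geometric-series identity $1-\beta^{-2}=\beta^{-1}$, and for \eqref{e:J'} both identify the lexicographic successor via the last occurrence of two consecutive zeros, observe that the suffix of $J$ beyond that position is forced to alternate $1,0,1,0,\dots$, and evaluate the resulting sum by parity of the suffix length (your $i^\ast$ is exactly the paper's index $k$, approached constructively rather than as the first digit where $J$ and $J'$ differ). Your explicit telescoping argument for the strict inequality $L_{n,J'}<1$ is a small welcome addition, since the paper leaves that point implicit.
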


\begin{proof}
Since $\beta^2=\beta+1$, or equivalently $\beta(1-\beta^{-2})=1$, $L_{n,\hat J}$ is equal to
\[\begin{cases}
\sum_{i=1}^{n/2}\beta^{-2i+1}=[{1-\beta^{-n}}]/[{\beta(1-\beta^{-2})}]=1-\beta^{-n} & \text{if }n\text{ is even},\\
\sum_{i=1}^{(n+1)/2}\beta^{-2i+1}=[{1-\beta^{-n-1}}]/[{\beta(1-\beta^{-2})}]=1-\beta^{-n-1} & \text{if }n\text{ is odd}.
\end{cases}
\]
 Thereby \eqref{e:lnhatJ} follows. 
 
 Let  $J=(j_1,...,j_n)\in\Omega_n$ with $J\prec\hat J$, and define $J'=(j_1',...,j_n')$ as in Lemma~ \ref{l:2a}. Let $j_k$ be the first digit where $J$ and $J'$ differ and $J_k=(j_1,\ldots,j_k)$. Then we must have $j_{k-1}=j_k=0$ and $j_k'=1$. Since $J'$ is minimal with $J\prec J'$, we must have that $j_{k-1}$ and $j_{k}$ is the last time two zeros appear in a row in $J$ and that all digits following $j_k'$ in $J'$ are zero. Hence, if $n-k$ is even, then $j_k=0$ implies $j_n=0$ and
 \begin{align*}
 	L_{n,J} {}&= L_{k,J_k} + \sum_{i=0}^{(n-k-2)/2}\beta^{-(k+1)-2i}=L_{k,J_k} + \beta^{-k}-\beta^{-n} \\
 	L_{n,J'} {}&= L_{k,J_k} + \beta^{-k}.
 \end{align*}
 Thus, \eqref{e:J'} holds when $n-k$ is even. 
If $n-k$ is odd, then $j_n=1$ and  
  \begin{align*}
 	L_{n,J} {}&= L_{k,J_k} + \sum_{i=0}^{(n-k-1)/2}\beta^{-(k+1)-2i}=L_{k,J_k} + \beta^{-k}-\beta^{-n-1} \\
 	L_{n,J'} {}&= L_{k,J_k} + \beta^{-k}.
 \end{align*}
This shows \eqref{e:J'} when $n-k$ is odd.
\end{proof}

For $j=0,1$, define the set $\Omega_{n,j}=\{(j_1,...,j_n)\in\Omega_n\,|\,j_n=j\}$ and its cardinality $N_j(n)=|\Omega_{n,j}|$, 
so $N(n)=|\Omega_n|$ is given by $N(n)=N_0(n)+N_1(n)$. Lemma~\ref{l:3} below determines these cardinalities 
in terms of the Fibonacci
sequence given by $b_0=0$, $b_1=1$, and $b_n=b_{n-1}+b_{n-2}$ for $n=2,3,...$ Recall that 
\begin{equation}\label{e:bn}
	b_n=[{\beta^n-(-\beta)^{-n}}]/{\sqrt 5},\quad n=0,1,2,...
\end{equation}

\begin{lemma}\label{l:3} For $n=1,2,...$, we have
\begin{equation}\label{e:fib}
N_1(n)=b_n,\quad N_0(n)=b_{n+1}, 
\quad N(n)=b_{n+2}.
\end{equation} 
\end{lemma} 
\begin{proof} We have $N_0(1)=N_1(1)=N_1(2)=1$ and $N_0(2)=2$. Let $(j_1,...,j_n)\in\Omega_n$ with $n>1$.
If $j_{n-1}=1$ then $j_n=0$ and $I_{j_1,...,j_{n-1}}=I_{j_1,...,j_n}$ is unchanged, whilst if $j_{n-1}=0$ then $I_{j_1,...,j_{n-1}}$ splits into $I_{j_1,...,j_{n-1},0}$ and $I_{j_1,...,j_{n-1},1}$. Therefore, $N_1(n)=N_0(n-1)$ and $N_0(n)=N_1(n-1)+N_0(n-1)$ whenever $n>1$.  
Consequently, $N_1(n)=N_1(n-1)+N_1(n-2)$ satisfies the recursion of the Fibonacci numbers for $n>1$, and since $N_1(1)=N_1(2)=1$, the first identity in \eqref{e:fib} follows. This implies the second and third identities in \eqref{e:fib}, since $N_0(n)=N_1(n+1)$ and $N(n)=N_0(n)+N_1(n)= b_n + b_{n+1}=b_{n+2}$ for $n\ge 1$.    
\end{proof}

\section{Distribution of the remainder}
\label{s:results-beta}

Let $X=\sum_{n=1}^\infty X_n\beta^{-n}$ be a random variable on $[0,1)$ and let $F$ denote the cumulative distribution function (CDF) of $X$. Let $F_n$ be the CDF of $T_\beta^n(X)$.

\begin{lemma}\label{l:2} Assume that for every $J\in\Omega_n$, $F$ has no jump at the endpoints of the interval $I_{n,J}$. Then for every $x\in[0,\beta^{-1})$ we have
	\begin{equation}\label{e:Fn-beta1}
		F_n(x)=\sum_{J\in\Omega_n}F\left(L_{n,J}+x\beta^{-n}\right)-F\left(L_{n,J}\right),
	\end{equation}
	and for every $x\in[\beta^{-1},1)$ we have
	\begin{equation}\label{e:Fn-beta2}
		F_n(x)=\sum_{J=(j_1,...,j_n)\in\Omega_n}F\left(L_{n,J}+(1-j_n)x\beta^{-n}+j_n\beta^{-n-1}\right)-F\left(L_{n,J}\right).
	\end{equation}
\end{lemma}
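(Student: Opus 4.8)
The plan is to condition on which partition interval $I_{n,J}$, $J\in\Omega_n$, contains $X$. By Lemma~\ref{l:2a} these intervals are disjoint and exhaust $[0,1)$, so $F_n(x)=\sum_{J\in\Omega_n}\Pr\!\left(T_\beta^n(X)\le x,\ X\in I_{n,J}\right)$. The structural input I would establish first is that $\{X\in I_{n,J}\}$ is exactly the event $\{(X_1,\dots,X_n)=J\}$; granting this, on $I_{n,J}$ we have $T_\beta^n(X)=\beta^n\sum_{k=n+1}^\infty X_k\beta^{-k}=\beta^n\!\left(X-L_{n,J}\right)$, so $T_\beta^n$ restricted to $I_{n,J}$ is the increasing affine bijection $X\mapsto\beta^n(X-L_{n,J})$. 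Its image is the decisive feature: it is all of $[0,1)$ when $j_n=0$ (the interval has length $\beta^{-n}$), but only $[0,\beta^{-1})$ when $j_n=1$ (length $\beta^{-n-1}$), the latter being the manifestation of the golden-ratio constraint $j_nj_{n+1}=0$ forcing the next digit to vanish.

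With this affine description I would translate the event $\{T_\beta^n(X)\le x\}\cap\{X\in I_{n,J}\}$ into a condition on $X$ and read off its probability as a difference of values of $F$, invoking the no-jump hypothesis at the endpoints of $I_{n,J}$ to pass freely between strict and non-strict inequalities. For $x\in[0,\beta^{-1})$ the threshold $T_\beta^n(X)\le x$ becomes $X\le L_{n,J}+x\beta^{-n}$, and since $x<\beta^{-1}$ the cutoff $L_{n,J}+x\beta^{-n}$ lies strictly below both possible right endpoints $L_{n,J}+\beta^{-n}$ and $L_{n,J}+\beta^{-n-1}$, hence strictly inside $I_{n,J}$ in either last-digit case. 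Each summand therefore equals $F\!\left(L_{n,J}+x\beta^{-n}\right)-F\!\left(L_{n,J}\right)$, giving \eqref{e:Fn-beta1}.

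For $x\in[\beta^{-1},1)$ the two cases split. When $j_n=0$ the image is still $[0,1)$, the cutoff $L_{n,J}+x\beta^{-n}$ remains inside $I_{n,J}$ (as $x<1$), and the summand is again $F\!\left(L_{n,J}+x\beta^{-n}\right)-F\!\left(L_{n,J}\right)$. When $j_n=1$, however, $T_\beta^n(X)<\beta^{-1}\le x$ for every $X\in I_{n,J}$, so the constraint is vacuous and the summand saturates to the full interval mass $F\!\left(L_{n,J}+\beta^{-n-1}\right)-F\!\left(L_{n,J}\right)$. Encoding the cutoff uniformly as $L_{n,J}+(1-j_n)x\beta^{-n}+j_n\beta^{-n-1}$ merges the two cases into \eqref{e:Fn-beta2}.

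I expect the main obstacle to be the careful bookkeeping in the $j_n=1$ case: one must confirm both that the image is exactly $[0,\beta^{-1})$ and that the saturated cutoff $L_{n,J}+\beta^{-n-1}$ coincides with the right endpoint of $I_{n,J}$, and one must apply the no-jump hypothesis at precisely the endpoints where the event indicator switches. The conceptual crux, however, is establishing that $I_{n,J}$ is exactly the set of $x$ whose first $n$ base-$\beta$ digits equal $J$; this I would deduce from the partition statement of Lemma~\ref{l:2a} together with the observation that the supremum of admissible remainders beyond the $n$-th digit is $\beta^{-n}$ when $j_n=0$ and $\beta^{-n-1}$ when $j_n=1$.
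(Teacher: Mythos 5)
Your proof is correct and is essentially the paper's own argument: partition $[0,1)$ into the intervals $I_{n,J}$ via Lemma~\ref{l:2a}, note that on $I_{n,J}$ the map $T_\beta^n$ acts as the increasing affine bijection $X\mapsto\beta^n(X-L_{n,J})$ onto $[0,1)$ or $[0,\beta^{-1})$ according to whether $j_n=0$ or $j_n=1$, split into the cases $x<\beta^{-1}$ versus $x\ge\beta^{-1}$ with $j_n=0$ or $j_n=1$, and use the no-jump hypothesis to express each piece as a difference of values of $F$. In fact your handling of the case $x\ge\beta^{-1}$, $j_n=1$ is the cleaner one: you correctly identify the event as all of $I_{n,J}$, with mass $F\left(L_{n,J}+\beta^{-n-1}\right)-F\left(L_{n,J}\right)$, exactly as \eqref{e:Fn-beta2} requires, whereas the paper's proof writes this event as $\{L_{n,J}\le X\le L_{n,J}+x\beta^{-n-1}\}$, which appears to be a slip (the factor $x$ should not be there) for the same full-interval statement.
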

\begin{proof}
	By Lemma~\ref{l:2a}, the collection of intervals 
	$I_{n,J}$ with $J\in\Omega_n$ 
	provides a subdivision of $[0,1)$, where $I_{n,J}$ has length $\beta^{-n}$ if $j_n=0$, and $\beta^{-n-1}$ if $j_n=1$. Let $x\in[0,1)$ and $J\in\Omega_n$.
	If $x<\beta^{-1}$ then $x\beta^{-n}\le\beta^{-n-1}$, and if $x\ge\beta^{-1}$ then $x\beta^{-n}\ge\beta^{-n-1}$. So if $x<\beta^{-1}$ or if both $x\ge\beta^{-1}$ and $J\in\Omega_{n,0}$, then 
	$X\in I_{n,J}$ and $T^n(X)\le x$ if and only if $L_{n,J}\le X\le L_{n,J}+x\beta^{-n}$. However, if $x\ge\beta^{-1}$ and $J\in\Omega_{n,1}$, then 
	$X\in I_{n,J}$ and $T^n(X)\le x$ if and only if $L_{n,J}\le X\leq L_{n,J}+x\beta^{-n-1}$. Now, using the assumption in the lemma, we immediately obtain \eqref{e:Fn-beta1} and \eqref{e:Fn-beta2}.
\end{proof}

\subsection{Main results}\label{s:main}

Henceforth, assume $X$ has a pdf $f$ on $[0,1)$. By Lemma~\ref{l:2}, $T_\beta^n(X)$ is absolutely continuous with pdf given by
\begin{equation}\label{e:newfn}
f_n(x)=\begin{cases}\sum_{J\in\Omega_n}\beta^{-n}f\left(L_{n,J}+x\beta^{-n}\right)&\text{if }0\le x<\beta^{-1},\\ 
\sum_{J\in\Omega_{n,0}}\beta^{-n}f\left(L_{n,J}+x\beta^{-n}\right)&\text{if }\beta^{-1}\le x<1.
\end{cases}
\end{equation}
Recall that $P_n$ denotes the probability measure defined by $f_n$ and that $f_{\beta}$ denotes the pdf given by \eqref{e:f}.

\bp 
Assume that for some integer $m\ge1$, $f$ is Lebesgue almost everywhere 
equal to some constant $C_J\geq 0$ on each interval $I_{m,J}$ with $J\in\Omega_m$. In addition assume that 
\begin{equation}\label{eq:con}
 \sum_{J\in \Omega_{m,0}} C_J = \beta^{m}\frac{1+\beta}{\sqrt{5}}, \quad   \sum_{J\in \Omega_m} C_J = \beta^m \frac{\beta}{\sqrt{5}}.
 \end{equation}
Then $f_n=f_\beta$ for $n=m,m+1,...$.
\ep

\begin{proof}
	First, let $y\in [0,\beta^{-1})$. Then 
	\begin{equation*}
	f_m(y) = \sum_{J\in\Omega_m}\beta^{-m}f\left(L_{m,J}+x\beta^{-m}\right) = \beta^{-m}\sum_{J\in\Omega_m}C_J =f_\beta(y),
	\end{equation*}
	where \eqref{e:newfn} was used in the first equality, the second equality  used the first assumption of the proposition together with the fact that $L_{m,J}+x\beta^{-m}\in I_{m,J}$, and the third equality used the assumption \eqref{eq:con}.
	
	Second, 
	for $y\in [\beta^{-1},1)$, by similar arguments,
	\begin{equation*}
	f_m(y) = \sum_{J\in\Omega_{m,0}} \beta^{-m} f\left(L_{m,J}+x\beta^{-m}\right) = \beta^{-m}\sum_{J\in\Omega_{m,0}}C_J =f_\beta(y).
	\end{equation*}
	Thus, the proposition follows for $n=m$. For  $n>m$, we just use that $f_\beta$ is $T_\beta$-invariant.
\end{proof}

We need some notation for the following theorem. For a real function $g$ defined on $[0,1]$, denote its $L_1$- and supremum-norm
by
 $\|g\|_1=\int_0^1|g(t)|\,\mathrm dt$ and
 $\|g\|_\infty=\sup_{x\in[0,1]}|g(x)|$, respectively, and denote the corresponding $L_1$-space by $L_1([0,1])=\{g\,|\,\|g\|_1<\infty\}$. Let $D([0,1])=\{g\geq 0\,|\,\|g\|_1=1\}$ be the subspace of pdfs, and  $CD([0,1])\subset D([0,1])$ its subspace of functions $g$ which are continuous on $[0,1]$ and differentiable on the open interval $(0,1)$ with $\|g'\|_\infty < \infty$. For $g\in CD([0,1])$, define  
 $g_n$ as in \eqref{e:newfn} with $f$ replaced by $g$.

	Let $\mathcal{B}$ denote the Borel subsets of $[0,1)$.
	 Define the total variation distance 
	 $$d_{\mathrm{TV}}(P_n,P_\beta) = \sup_{A\in \mathcal{B}}|P_n(A) - P_\beta(A)| = \frac{1}{2}\|f_n-f_\beta\|,$$ 
	see e.g.\ Lemma~2.1 in \cite{Tysbakov} for the second equality. 
	
	\begin{theorem}
			\label{t:3} 
			If $g \in CD([0,1])$ then
		\begin{equation}\label{e:r1}
		\|f_n-f_\beta\|_1 \le \|f-g\|_1 + O\left(\beta
		^{-2n/3 } (\|g'\|_\infty + 1) \right).
		\end{equation}
		In particular, 
		\begin{equation}\label{e:r2}
		\lim _{n\to \infty} d_{\mathrm{TV}}(P_n,P_\beta) = 0.
		\end{equation}
		Furthermore,  if $f \in CD([0,1])$ then $P_n$ converges exponentially fast:
		\begin{equation}\label{e:r3}
		d_{\mathrm{TV}}(P_n,P_\beta) = O\left( \beta^{-2n/3}( \|f'\|_\infty +1) \right).
		\end{equation}

	\end{theorem}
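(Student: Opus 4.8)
The plan is to establish the single bound \eqref{e:r1}; the other two assertions then follow at once. For \eqref{e:r3} I take $g=f$, so the first term vanishes and $d_{\mathrm{TV}}(P_n,P_\beta)=\tfrac12\|f_n-f_\beta\|_1$ inherits the rate. For \eqref{e:r2} I use that $CD([0,1])$ is dense in $D([0,1])$ in $L_1$-norm (mollify $f$ with a smooth nonnegative kernel and renormalise): given $\eps>0$, choose $g\in CD([0,1])$ with $\|f-g\|_1<\eps$, let $n\to\infty$ in \eqref{e:r1}, and deduce $\limsup_n\|f_n-f_\beta\|_1\le\eps$.

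Two ingredients yield \eqref{e:r1}. First, the map $h\mapsto h_n$ given by \eqref{e:newfn} (defined for any $h\in L_1([0,1])$, not just densities) is an $L_1$-contraction, $\|h_n\|_1\le\|h\|_1$. I would prove this straight from \eqref{e:newfn}: applying the change of variables $u=L_{n,J}+x\beta^{-n}$ to each summand, a cylinder with $j_n=1$ contributes $I_{n,J}$ once (only the $x\in[0,\beta^{-1})$ piece occurs), while for $j_n=0$ the pieces coming from $x\in[0,\beta^{-1})$ and $x\in[\beta^{-1},1)$ reassemble $I_{n,J}$; by Lemma~\ref{l:2a} these intervals tile $[0,1)$, so $\int_0^1|h_n|\le\sum_{J\in\Omega_n}\int_{I_{n,J}}|h|=\|h\|_1$. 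As $h\mapsto h_n$ is linear, $f_n-g_n=(f-g)_n$ and $\|f_n-g_n\|_1\le\|f-g\|_1$; the triangle inequality then reduces \eqref{e:r1} to the estimate $\|g_n-f_\beta\|_1=O(\beta^{-2n/3}(\|g'\|_\infty+1))$ for $g\in CD([0,1])$.

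This estimate is the heart of the matter, and I would prove it by a two-scale argument. Fix an intermediate level $m<n$ and group the level-$n$ cylinders by their first $m$ digits $K\in\Omega_m$. Since cylinders are nested, every point $L_{n,J}+x\beta^{-n}$ with $J$ refining $K$ lies in $I_{m,K}$, so replacing $g$ there by its mean over $I_{m,K}$ costs at most $\mathrm{osc}_{I_{m,K}}(g)\le\beta^{-m}\|g'\|_\infty$ per summand; as $\sum_{J\in\Omega_n}\beta^{-n}=O(1)$ by Lemma~\ref{l:3}, the total error is $O(\beta^{-m}\|g'\|_\infty)$ uniformly in $x$. After this replacement \eqref{e:newfn} collapses, for $x\in[0,\beta^{-1})$, to $\sum_{K\in\Omega_m}c_K\int_{I_{m,K}}g$ with $x$-independent weights $c_K=\beta^{-n}\#\{J\in\Omega_n:J\text{ refines }K\}/|I_{m,K}|$. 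Counting refinements via Lemma~\ref{l:3} (their number depends only on whether $j_m$ is $0$ or $1$) and inserting the closed form \eqref{e:bn}, one finds the remarkable fact that \emph{both} cases give the same value $c_K=\beta^2/\sqrt5+O(\beta^{-2(n-m)})$, the agreement hinging on $\beta^2=\beta+1$. Since $g\ge0$ gives $\sum_K\int_{I_{m,K}}g=1$, this produces
\[
g_n(x)=\frac{\beta^2}{\sqrt5}+O\!\left(\beta^{-2(n-m)}\right)+O\!\left(\beta^{-m}\|g'\|_\infty\right),\qquad x\in[0,\beta^{-1}),
\]
which matches $f_\beta=(1+\beta)/\sqrt5=\beta^2/\sqrt5$; the parallel computation over $\Omega_{n,0}$ gives the constant $\beta/\sqrt5$ on $[\beta^{-1},1)$. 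Integrating, $\|g_n-f_\beta\|_1=O(\beta^{-2(n-m)}+\beta^{-m}\|g'\|_\infty)$, and balancing the two errors by $m=\lceil 2n/3\rceil$ yields the exponent $\beta^{-2n/3}$.

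I expect the counting step to be the main obstacle: one must check that the two branch types---$j_m=0$, with $|I_{m,K}|=\beta^{-m}$ and $b_{n-m+2}$ refinements, versus $j_m=1$, with $|I_{m,K}|=\beta^{-m-1}$ and $b_{n-m+1}$ refinements---produce identical leading constants with a geometric remainder $O(\beta^{-2(n-m)})$, and likewise for the refinements ending in $0$ that govern $[\beta^{-1},1)$. This is exactly the quantitative, approximate form of the exact cancellation in the Proposition, where a density that is genuinely piecewise constant at level $m$ and satisfies \eqref{eq:con} is mapped to $f_\beta$ with no remainder. I note in passing that using the exact identity $\int_{\bigcup_{j_n=0}I_{n,J}}g=\int_0^{\beta^{-1}}g_{n-1}$ in place of the approximate count would convert the argument into a scalar recursion and likely sharpen the exponent, but $\beta^{-2n/3}$ already suffices for the stated conclusions.
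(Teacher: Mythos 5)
Your proposal is correct and follows essentially the same route as the paper's proof: the $L_1$-contraction $\|f_n-g_n\|_1\le\|f-g\|_1$ via the tiling of $[0,1)$ from Lemma~\ref{l:2a}, then a two-scale argument (your $m$ is the paper's $n_1$, your refinement counts $b_{n-m+2}$, $b_{n-m+1}$ are the paper's $N(n_2)$, $N(n_2-1)$) with the Fibonacci asymptotics \eqref{e:bn} producing the common constant $\beta^2/\sqrt5$ on $[0,\beta^{-1})$ and $\beta/\sqrt5$ on $[\beta^{-1},1)$, and the same balancing $m\approx 2n/3$ giving the exponent $\beta^{-2n/3}$. Your handling of \eqref{e:r2} by mollification just makes explicit a density fact the paper leaves implicit.
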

	
	\begin{proof}
		Let $g\in CD([0,1])$. We have 		
		\begin{align}\nonumber
		&\int_0^{\beta^{-1}} |f_n(x) - g_n(x)|\, \mathrm d x \\ \nonumber
		&\le \sum_{J\in \Omega_n}\beta^{-n}\int_0^{\beta^{-1}} |f(L_{n,J} + x\beta^{-n}) - g(L_{n,J} + x\beta^{-n})|\, \mathrm d x\\
		 &= \sum_{J\in \Omega_{n,0}}\int_{L_{n,J}}^{L_{n,J}+\beta^{-n-1}} |f( u) - g(u)|\, \mathrm d u + \sum_{J\in \Omega_{n,1}} \int_{I_{n,J}} |f(u) - g (u)|\,\mathrm d u, \label{e:fn-gn}
		\end{align}
		while 
		\begin{equation}\label{e:fn-gn2}
		\int_{\beta^{-1}}^1 |f_n(x) - g_n(x)|\, \mathrm d x  \le \sum_{J\in \Omega_{n,0}} \int_{L_{n,J}+\beta^{-n-1}}^{L_{n,J}+\beta^{-n}} | f(u) - g(u)|\, \mathrm d u.
		\end{equation}
		Adding \eqref{e:fn-gn} and \eqref{e:fn-gn2} and using Lemma \ref{l:2a} gives
		\begin{equation}\label{fn-gn}
		\|f_n - g_n\|_1 \le \|f-g\|_1.
		\end{equation}
			
		 Set $n = n_1 + n_2$. For $J\in \Omega_{n_1}$ define $\Omega_n(J)$ to be the set of $(j_1,\ldots,j_n)\in \Omega_n$ for which $(j_1,\ldots,j_{n_1})=J$.  Note that $I_{n_1,J}= \bigcup_{J' \in \Omega_{n}(J)} I_{n,J'}$.  Similarly, let $\Omega_{n,0}(J)$ and $\Omega_{n,1}(J)$ denote the sets consisting of $(j_1,\ldots,j_n)\in \Omega_n(J)$ with $j_n=0$ and $j_n=1$, respectively.  Then, for $x\in [0,\beta^{-1})$, \eqref{e:newfn} yields 
		\begin{align*}
		&g_n(x) 
		= \sum_{J\in \Omega_{n_1}}\sum_{J'\in \Omega_{n}(J)} \beta^{-n} g(L_{n_1,J'}+ \beta^{-n}x)  \\
		 &= \bigg(\sum_{J\in \Omega_{n_1}}\sum_{J'\in \Omega_{n}(J)} \beta^{-n}|I_{n_1,J}|^{-1}\int_{I_{n_1,J}}g(t) \, \mathrm d t \bigg)+ O(\beta^{-n_1+2})\|g'\|_\infty, 
		 \end{align*}
		 where in the second equality we used that by the mean value theorem
		 $$ |g(L_{n,J'} + \beta^{-n}x) - g(t)| \le \|g'\|_\infty|I_{n_1,J}| \le \|g'\|_\infty\beta^{-n_1} ,  $$ 
		 for $J\in \Omega_{n_1}$, $J' \in \Omega_n(J)$ and $t\in I_{n_1,J}$, and the fact that by Lemma \ref{l:3}, $N(k)\in O(\beta^{k+2})$. Furthermore,
		\begin{align} \nonumber
		&g_n(x)= \bigg(\sum_{J\in \Omega_{n_1,0}}N(n_2)\beta^{-n + n_1} \int_{I_{n_1,J}}g(t) \mathrm d t \\ \nonumber
		&+ \sum_{J\in \Omega_{n_1,1}}N(n_2-1)\beta^{-n+n_1+1}  \int_{I_{n_1,J}}g(t) \mathrm d t\bigg) + O(\beta^{-n_1+2})\|g'\|_\infty\\  \label{gn1}
          & = \sum_{J\in \Omega_{n_1}}\bigg(\frac{\beta + 1}{\sqrt{5}}\int_{I_{n_1,J}}g(t) \mathrm d t\bigg) +   O(\beta^{-n_1+2}\|g'\|_\infty) + O(\beta^{-2n_2}),
		\end{align}
		where we used that $\beta^{-k} N(k) = \beta^2/\sqrt{5} + O(\beta^{-2k-2}) = (1+\beta)/\sqrt{5} + O(\beta^{-2n_2-2}) $ by Lemma~\ref{l:3} and that $\sum_{J \in \Omega_{n_1}} \int_{I_{n_1,J}}|g(t)| \mathrm d t = 1$.
		
		For  $x\in [\beta^{-1},1)$, a similar computation yields
		\begin{align}\nonumber
		&g_n(x)
	     =\bigg(\sum_{J\in \Omega_{n_1}} \sum_{J'\in \Omega_{n,0}(J)} \beta^{-n}|I_{n_1,J}|^{-1} \int_{I_{n_1,J}}g(t) \,\mathrm d t  \bigg) + O(\beta^{-n_1+1}) \|g'\|_\infty\\ \nonumber
		&=\bigg(\sum_{J\in \Omega_{n_1,0}} \beta^{-n + n_1}  N_0(n_2) \int_{I_{n_1,J}}g(t) \,\mathrm d t  \\ \nonumber
		&+\sum_{J\in \Omega_{n_1,1}} \beta^{-n + n_1+1} N_0(n_2-1) \int_{I_{n_1,J}}g(t)\,\mathrm  d t \bigg) +  O(\beta^{-n_1+1}) \|g'\|_\infty\\
		&=\frac{\beta}{\sqrt{5}} + O(\beta^{-2n_2+1})  + O(\beta^{-n_1+1}) \|g'\|_\infty, \label{gn2}
		\end{align}
		where we used that $\beta^{-k} N_0(k) = \beta/\sqrt{5} + O(\beta^{-2k-1})  $ by Lemma~\ref{l:3}. 
		
		Combining \eqref{gn1} and \eqref{gn2} shows 
		\begin{equation}\label{gn-fT}
		g_n(x) = f_\beta(x) + O(\beta^{-n_1+2})\|g'\|_\infty  + O(\beta^{-2n_2+1}).
		\end{equation}
				 Set $n_2 = \lfloor n/3 \rfloor$.  Using \eqref{fn-gn} and \eqref{gn-fT}, we get 
		\begin{align*}
		\|f_n-f_\beta\|_1 \le \|f_n-g_n\|_1 + O(\beta^{-2n/3+3})(\|g'\|_\infty+ 1) \\
		\le \|f-g\|_1 + O(\beta^{-2n/3+3})(\|g'\|_\infty + 1), 
		\end{align*}
		 which shows \eqref{e:r1}  and \eqref{e:r3}. We let $n\to \infty$  and obtain 
		\begin{equation*}
		\limsup_{n\to \infty} \|f_n-f_\beta\|_1 \le \|f-g\|_1.
		\end{equation*}
		The right hand side can be made arbitrarily small, which shows \eqref{e:r2}.   
\end{proof}

\section*{Acknowledgements}
Supported by The Danish Council for Independent Research —
Natural Sciences, grant DFF – 10.46540/2032-00005B.

\end{document}